\newcommand{\rr}{\mathbf{\mathbb{R}}}
\newcommand{\nn}{\mathbf{\mathbb{N}}}
\newcommand{\vf}{\varphi}
\newcommand{\dist}{\operatorname{dist}}
\newcommand{\Go}{\operatorname{\varrho_0}}
\newtheorem{lemma}{Lemma}
\newtheorem{theorem}{Theorem}
\newtheorem*{nonmath}{Main Theorem}
\newtheorem{st}{Step}
\theoremstyle{remark}
\title{$C^r$-right equivalence of analytic functions}
\author{Piotr Migus}
\thanks{2010 {\it Mathematics Subject Classification}: 58K40, 14B05.} 
\thanks{{\it Key words and phrases}: analytic functions, $C^r$ equivalence, right equivalence.}
\thanks{This research was partially supported by the Polish OPUS Grant No 2012/07/B/ST1/03293.}
\address{Faculty of Mathematics and Computer Science, Wydzia\l{} Matematyki i Informatyki, Uniwersytet \L{}\'o{}dzki, Banacha 22, 90-238 \L{}\'o{}d\'z{}, Poland}
\date{03.12.2014}
\email{migus@math.uni.lodz.pl}
\begin{document}
\begin{abstract}
Let $f,g:(\rr^n,0)\rightarrow (\rr,0)$ be analytic functions. We will show that if $\nabla f(0)=0$ and $g-f \in (f)^{r+2}$ then $f$ and $g$ are $C^r$-right equivalent, where $(f)$ denote ideal generated by $f$ and $r\in \nn$. 
\end{abstract}

\maketitle

\section{Introduction and result}
By $\nn$ we denote the set of positive integers. A norm in $\rr^n$ we denote by $|\cdot|$ and by $\dist(x,V)$ - the distance of a point $x\in \rr^n$ to a set $V\subset\rr^n$ (put $\dist(x,V)=1$ if $V=\emptyset$).

Let $f,g:(\rr^n,0)\rightarrow (\rr,0)$ be analytic functions. We say that $f$ and $g$ are $C^r$-\emph{right} \emph{equivalent} if there exists a $C^r$ diffeomorphism $\vf:(\rr^n,0)\rightarrow (\rr^n,0)$ such that $f=g \circ \vf$ in a neighbourhood of $0$. 

Let $f:(\rr^n,0)\rightarrow \rr$ be an analytic function. By $\mathcal{J}_f$ we denote the ideal generated by $\frac{\partial f}{\partial x_1},\dots ,\frac{\partial f}{\partial x_n}$ in the set of analytic functions $(\rr^n,0)\rightarrow \rr$. The ideal $\mathcal{J}_f$ is called the \emph{Jacobi ideal}. Moreover, by $(f)$ we denote the ideal in set of analytic functions $(\rr^n,0)\rightarrow \rr$ generated by $f$.

The aim of this paper is proof of the following theorem

\begin{nonmath}\label{the:1}
Let $f,g:(\rr^n,0)\rightarrow (\rr,0)$ be analytic functions and let $\nabla f(0)=0$. If $(g-f)\in (f)^{r+2}$ then $f$ and $g$ are $C^r$-right equivalent, where $r\in \nn$.
\end{nonmath}

The above theorem is a modification of author's result about $C^r$-right equivalence of $C^{r+1}$ functions. In \cite[Theorem 5]{mrs} and \cite[Theorem 1]{mig} it has been proved 

\begin{theorem}\label{the:2}
Let $f,g:(\rr^n,0)\rightarrow (\rr,0)$ be $C^k$ functions, $k,r\in \nn$ be such that $k\geq r+1$ and let $\nabla f(0)=0$. If $(g-f)\in (\mathcal{J}_fC^{k-1}(n))^{r+2}$ then $f$ and $g$ are $C^r$-right equivalent. By $\mathcal{J}_fC^{k-1}(n)$ we mean the Jacobi ideal defined in the set of $C^{k-1}$ functions $(\rr^n,0)\rightarrow \rr$. 
\end{theorem}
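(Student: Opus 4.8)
The natural approach is the homotopy (path) method of Thom and Levine, i.e. Moser's deformation argument. The plan is to join $f$ and $g$ by the $C^k$ family $f_t:=f+t(g-f)$, $t\in[0,1]$, and to construct a family of $C^r$ diffeomorphism germs $\psi_t:(\rr^n,0)\rightarrow(\rr^n,0)$, continuous in $t$, with $\psi_0=\mathrm{id}$ and
\[
f_t\circ\psi_t=f\qquad\text{on a fixed neighbourhood of }0,\ \text{for all }t\in[0,1].
\]
Setting $\vf:=\psi_1$ then gives $g\circ\vf=f_1\circ\psi_1=f$, which is exactly the asserted $C^r$-right equivalence. I would obtain $\psi_t$ as the flow of a time-dependent vector field $X_t$, so that $\tfrac{d}{dt}\psi_t=X_t\circ\psi_t$; differentiating $f_t\circ\psi_t=f$ in $t$ reduces everything to the homological (infinitesimal) equation
\[
(g-f)(y)+\sum_{i=1}^{n}\frac{\partial f_t}{\partial y_i}(y)\,X_t^{\,i}(y)=0,\qquad y\text{ near }0,\ t\in[0,1]. \tag{$\ast$}
\]

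First I would record a preliminary normalisation. Since $g-f\in(\mathcal{J}_fC^{k-1}(n))^{r+2}$, the Leibniz rule gives $\partial(g-f)/\partial y_i\in(\mathcal{J}_fC^{k-1}(n))^{r+1}$, so there is a matrix $M$ of functions vanishing at $0$ (as $r\in\nn$, hence $r\ge1$) with $\nabla f_t=(I+tM)\nabla f$. Because $I+tM$ is invertible near $0$, the ideals $\mathcal{J}_{f_t}$ and $\mathcal{J}_f$ coincide, whence $g-f\in(\mathcal{J}_{f_t}C^{k-1}(n))^{r+2}$ uniformly in $t$; in particular $\nabla f_t(0)=0$, so $0$ is a critical point of every $f_t$. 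I would then solve $(\ast)$ explicitly by the gradient choice
\[
X_t^{\,i}(y):=-\,\frac{(g-f)(y)}{|\nabla f_t(y)|^2}\,\frac{\partial f_t}{\partial y_i}(y),
\]
defined a priori only where $\nabla f_t\neq0$; substituting into $(\ast)$ and using $\sum_i(\partial f_t/\partial y_i)^2=|\nabla f_t|^2$ verifies the equation identically.

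The hard part — the step that consumes both the exponent $r+2$ and the hypothesis $k\ge r+1$ — is to show that $X_t$ extends to a genuine $C^r$ vector field vanishing on the critical set $\{\nabla f_t=0\}$ (in particular $X_t(0)=0$) and depending continuously on $t$. This is a division/regularity lemma: writing $g-f$ as a sum of products of $r+2$ first derivatives of $f_t$ with $C^{k-1}$ coefficients and pairing one such derivative against the factor $(\partial f_t/\partial y_i)/|\nabla f_t|^2$, one checks via Taylor expansion with remainder that each resulting summand, together with its partial derivatives up to order $r$, extends continuously across $\{\nabla f_t=0\}$. The two ``spare'' factors in the exponent $r+2$ are precisely what absorb the order-two singularity of $1/|\nabla f_t|^2$, while $k\ge r+1$ keeps the coefficients differentiable enough. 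I expect this estimate to be the main obstacle and the technical core of the argument.

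Finally I would integrate $X_t$. Since $X_t$ is $C^r$ (hence locally Lipschitz) and $X_t(0)=0$, the origin is stationary, so by continuous dependence the trajectories issuing from a sufficiently small ball remain near $0$ and the flow $\psi_t$ is defined on the whole interval $[0,1]$; classical smoothness of flows then yields that $\psi_t$ is $C^r$ in $y$ and continuous in $t$, with $\psi_t(0)=0$. Running the flow backwards shows each $\psi_t$ is invertible, and since $D\psi_0(0)=\mathrm{Id}$ evolves along a linear ODE it remains invertible, so $\psi_t$ is a $C^r$ diffeomorphism germ fixing $0$. Taking $\vf=\psi_1$ completes the proof.
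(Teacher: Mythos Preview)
The paper does not actually prove Theorem~\ref{the:2}; it is quoted from \cite{mrs,mig}, and the paper only remarks that the method there is the same as for its Main Theorem: build a Kuiper--Kuo type $C^r$ vector field and integrate it. Your proposal is precisely this scheme and is correct.

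Comparing with the proof of the Main Theorem (the proxy the paper offers), the only visible difference is cosmetic: the paper forms the full space--time gradient $\nabla F=\bigl((g-f),\nabla_x f_t\bigr)$ in the Kuiper--Kuo quotient and then normalises the $\xi$-component to $1$ to obtain the field $W$ to integrate, whereas you write down the spatial field $X_t=-(g-f)\,\nabla f_t/|\nabla f_t|^{2}$ directly; both solve the same homological equation~$(\ast)$. For the $C^r$-extension across the critical set the paper's route is the explicit quotient estimate of Lemma~\ref{lem.tech} combined with Leibniz and Lemma~\ref{lem:1}, yielding $|\partial^{\alpha}X_i|\le C\,\dist(x,Z)^{r+1-|\alpha|}$, followed by an induction (Step~3) to show the derivatives vanish on $Z$. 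This is the precise form of your ``division lemma''; your phrase ``Taylor expansion with remainder'' is a bit loose, but since in Theorem~\ref{the:2} one already has $g-f\in(\mathcal{J}_f)^{r+2}$, the numerator is controlled by $|\nabla f|^{r+2}$ without any {\L}ojasiewicz inequality and the estimate goes through with the hypothesis $k\ge r+1$ used exactly where you indicate.
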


Methods of proofs of above theorems are similar. First we construct suitable vector field of class $C^r$ and next we integrate this vector field. The idea of construct vector field is descended from N.~H.~Kuiper, T.~C.~Kuo (\cite{Kui}, \cite{Kuo}). Whereas, integration of vector field is descended from  Ch.~Ehresmann (\cite{ehr}, see also \cite{gg}).

There exists one more result which deals with $C^r$-right equivalence of functions with similar condition for $(g-f)$. Namely, J.~Bochnak has proved the following theorem (\cite[Theorem 1]{boch})

\begin{theorem}\label{the:3}
Let $f,g:(\rr^n,0)\rightarrow (\rr,0)$ be $C^k$ functions, $k,r\in \nn$ be such that $k\geq r+2$ and let $\nabla f(0)=0$. If $(g-f)\in \mathfrak{m}(\mathcal{J}_fC^{k-1}(n))^{2}$ then $f$ and $g$ are $C^r$-right equivalent. By $\mathcal{J}_fC^{k-1}(n)$ and $\mathfrak{m}$ we mean respectively the Jacobi ideal and maximal ideal defined in the set of $C^{k-1}$ functions $(\rr^n,0)\rightarrow \rr$. 
\end{theorem}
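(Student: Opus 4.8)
The plan is to use the homotopy (path) method of Thom--Levine, realizing the desired diffeomorphism as the flow of a suitable time-dependent vector field, exactly as in the proofs of the theorems cited above and the construction of N.~H.~Kuiper and T.~C.~Kuo (\cite{Kui}, \cite{Kuo}). Write $h:=g-f$ and for $t\in[0,1]$ set $f_t:=f+th$, so that $f_0=f$ and $f_1=g$. I would seek a family of $C^r$ diffeomorphisms $\vf_t:(\rr^n,0)\to(\rr^n,0)$ with $\vf_0=\mathrm{id}$ and $f_t\circ\vf_t=f$ for all $t\in[0,1]$; taking $t=1$ then yields $g\circ\vf_1=f$, i.e. $\vf:=\vf_1$ realizes the $C^r$-right equivalence. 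The family $\vf_t$ will be the flow of a vector field $X_t$, i.e. $\tfrac{d}{dt}\vf_t=X_t\circ\vf_t$. Differentiating $f_t(\vf_t(x))=f(x)$ in $t$ and using $\partial_tf_t=h$ leads to the homological equation
$$\nabla f_t(y)\cdot X_t(y)=-h(y),$$
to be solved for $X_t$ near $0$, uniformly in $t$, with $X_t(0)=0$.

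The natural candidate is the gradient-type field
$$X_t:=-\frac{h}{|\nabla f_t|^2}\,\nabla f_t,\qquad X_t^i=-\frac{h\,(\partial f/\partial x_i+t\,\partial h/\partial x_i)}{|\nabla f_t|^2},$$
which solves the homological equation wherever $\nabla f_t\ne0$. Using the hypothesis $h\in\mathfrak m(\mathcal J_fC^{k-1}(n))^2$ I would write $h=\sum_{k,l}b_{kl}\,\tfrac{\partial f}{\partial x_k}\,\tfrac{\partial f}{\partial x_l}$ with $b_{kl}\in\mathfrak m$ of class $C^{k-1}$. Since $\nabla f(0)=0$, near $0$ one has $|\nabla f|\le C|x|$ and $|b_{kl}|\le C|x|$, whence the elementary estimates
$$|h|\le C|x|\,|\nabla f|^2,\qquad |\nabla h|\le C|x|\,|\nabla f|\le\tfrac12|\nabla f|.$$
The last inequality gives $|\nabla f_t|\ge\tfrac12|\nabla f|$ uniformly in $t\in[0,1]$, so the quotient above is well defined off the critical set $\{\nabla f=0\}$, and combining the bounds yields $|X_t^i|\le C|x|^2$. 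Thus $X_t$ extends continuously across $\{\nabla f=0\}$ by $X_t=0$ there, vanishes at $0$, and depends continuously on $(x,t)$.

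The main obstacle is to upgrade this continuity to $C^r$-regularity of $X_t$ across the critical set, since both numerator and denominator vanish there. I would prove this by the Kuiper--Kuo--Bochnak derivative estimates: differentiating $X_t^i$ up to order $r$ produces a sum of terms of the form $(\text{products of derivatives of the }b_{kl}\text{ and of }f)/|\nabla f_t|^{2m}$, and one bounds each term using that every numerator carries two factors $\partial f/\partial x_j$ (from $(\mathcal J_f)^2$), which control one power of $|\nabla f_t|^{-2}$, together with one extra factor $|x|$ (from $\mathfrak m$). This is precisely where the hypotheses enter: the factor $\mathfrak m$ supplies the extra vanishing needed so that each differentiated term stays bounded and tends to $0$ on $\{\nabla f=0\}$, while $k\ge r+2$ guarantees that $f\in C^k$ and the coefficients $b_{kl}$ are of class $C^{k-1}$, hence at least $C^{r+1}$, so that they remain $C^1$ after $r$ differentiations and all partial derivatives of $X_t^i$ of order $\le r$ are continuous, uniformly in $t$. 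A bookkeeping induction on the order of differentiation completes this step.

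Finally, once $X_t$ is shown to be a $C^r$ vector field on a neighbourhood of $0$ with $X_t(0)=0$, depending continuously on $t\in[0,1]$, I would integrate it in the spirit of Ehresmann (\cite{ehr}, \cite{gg}): after shrinking the neighbourhood the flow $\vf_t$ is defined for all $t\in[0,1]$, is a $C^r$ diffeomorphism onto its image, satisfies $\vf_0=\mathrm{id}$, and fixes $0$ because $X_t(0)=0$. By construction $\tfrac{d}{dt}\bigl(f_t\circ\vf_t\bigr)=0$, so $f_t\circ\vf_t\equiv f$, and evaluating at $t=1$ gives $g\circ\vf_1=f$. Hence $\vf:=\vf_1$ is the desired $C^r$ diffeomorphism, and $f$ and $g$ are $C^r$-right equivalent.
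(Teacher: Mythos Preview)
The paper does not actually prove Theorem~\ref{the:3}; it merely cites Bochnak and remarks that his proof ``bases on Tougeron's Implicit Theorem''. That is a genuinely different mechanism from the Kuiper--Kuo gradient field you propose, and the difference is not cosmetic: your approach breaks down at the regularity step.

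The gap is in your claim that the field $X_t=-h\,\nabla f_t/|\nabla f_t|^2$ is of class $C^r$ across the critical set when only $h\in\mathfrak m(\mathcal J_f)^2$. The single factor from $\mathfrak m$ and the two factors from $\mathcal J_f$ control at most two or three derivatives of $X_t$; they do \emph{not} buy $C^r$ for arbitrary $r$. This is exactly why Theorem~\ref{the:2}, proved by the same vector-field method, needs $(\mathcal J_f)^{r+2}$ rather than a fixed power. Concretely, take $n=2$, $f(x,y)=x^3+y^3$, and $h=x^3y^2=x\cdot x^2\cdot y^2\in\mathfrak m(\mathcal J_f)^2$. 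Then at $t=0$,
\[
X_1(x,y)=-\frac{h\,\partial_x f}{|\nabla f|^2}=-\frac{x^5y^2}{3(x^4+y^4)},
\]
and a direct computation gives
\[
\partial_x\partial_y^2 X_1\longrightarrow -\tfrac{2}{3}\ \text{along }y=0,\qquad
\partial_x\partial_y^2 X_1\longrightarrow \tfrac{8}{3}\ \text{along }x=y,
\]
so $X_1$ is not even $C^3$ at the origin, while Bochnak's theorem asserts $C^r$-equivalence for every $r$ here. Thus the ``bookkeeping induction'' you allude to cannot close: differentiating $1/|\nabla f_t|^2$ repeatedly produces negative powers of $|\nabla f|$ that the fixed supply $|x|\,|\nabla f|^2$ in the numerator cannot absorb beyond low order. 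Bochnak's route via Tougeron's implicit function theorem avoids this obstacle by constructing the diffeomorphism directly from a presentation $h=\sum a_{ij}\,\partial_i f\,\partial_j f$ rather than by integrating the specific gradient field.
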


Proof of this theorem bases on Tougeron's Implicit Theorem (\cite{tou}).

Comparing the above results we see that Theorem \ref{the:2} deals with $C^r$-right equivalence of $C^{r+1}$ functions, whereas Theorem \ref{the:3} deals with  $C^r$-right equivalence of $C^{r+2}$ functions. Since in the last Theorem power of Jacobi ideal does not depend on $r$, so it is difficult to say which Theorem is stronger. Additional, since in Main Theorem $(g-f)$ belongs to some power of ideal generated by $f$, whereas in Theorem \ref{the:2} and Theorem \ref{the:3} $(g-f)$ belongs to some power of ideal generated by partial derivatives of $f$, so this results are completely other type.

\section{Auxiliary results}\label{sec:2}

We start from define {\L}ojasiewicz exponent in the gradient inequality. 

Let $f:(\rr^n,0)\rightarrow(\rr,0)$ be an analytic function. It is known that there exists a neighbourhood $U$ of $0\in \rr^n$ and constants $C>0$, $\eta \in [0,1)$ such that the following \emph{{\L}ojasiewicz gradient inequality} holds
$$
|\nabla f(x)|\geq C|f(x)|^{\eta}, \quad \textrm{ for }x\in U.
$$
The smallest exponent $\eta$ in the above inequality is called the \emph{{\L}ojasiewicz exponent in the gradient inequality} and is denoted by $\Go(f)$ (cf. \cite{Loj2}, \cite{Loj3}).

From the above inequality we obtain immediately that there exists a neighbourhood $U$ of $0\in\rr^n$ and a constant $C>0$ such that
\begin{equation}\label{eq:lo}
|\nabla f(x)|\geq C|f(x)|, \quad \textrm{ for }x\in U.
\end{equation}

\vspace{0.5cm}

Let $M,m,r\in\nn$, $M>r$. Moreover, let $p,q_1,\dots,q_m:(\rr^n,0)\rightarrow \rr$ be analytic functions and let $\mathcal{Q}$ denote the ideal generated by $q_1,\dots,q_m$. 

\begin{lemma}[see \cite{mig}]\label{lem:1}
If $p\in \mathcal{Q}^M$ then 
\begin{itemize}
\item[(i)] $\frac{\partial^r p}{\partial x_{i_1} \dots \partial x_{i_r}}\in \mathcal{Q}^{M-r}$ for $i_1,\dots,i_r \in \{1,\dots,n\}$,
\item[(ii)] $|p(x)|\leq  C |(q_1(x),\dots,q_n(x))|^M$ in a neighbourhood of $0$ and for some positive constant $C$.
\end{itemize}
\end{lemma}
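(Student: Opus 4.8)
The plan is to exploit the concrete description of $\mathcal{Q}^M$: every $p\in\mathcal{Q}^M$ can be written as a finite sum
$$p=\sum_i a_i\, q_{j_{i,1}}\cdots q_{j_{i,M}},$$
where the $a_i$ are analytic near $0$ and each factor $q_{j_{i,k}}$ is one of the generators $q_1,\dots,q_m$. The two assertions are essentially independent and both follow directly from this representation; (i) is algebraic while (ii) is an elementary estimate.

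For (i) I would first treat a single derivative $\partial/\partial x_\ell$ and then iterate. Applying the Leibniz rule to the representation above gives
$$\frac{\partial p}{\partial x_\ell}=\sum_i \frac{\partial a_i}{\partial x_\ell}\, q_{j_{i,1}}\cdots q_{j_{i,M}}+\sum_i a_i\sum_{s=1}^{M}\frac{\partial q_{j_{i,s}}}{\partial x_\ell}\prod_{k\neq s}q_{j_{i,k}}.$$
In the first sum each term is an analytic function times a product of $M$ generators, hence lies in $\mathcal{Q}^M\subseteq\mathcal{Q}^{M-1}$; in the second sum each term is an analytic function times a product of $M-1$ generators, hence lies in $\mathcal{Q}^{M-1}$. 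Therefore $\partial p/\partial x_\ell\in\mathcal{Q}^{M-1}$. I would then induct on $r$: having reached $\mathcal{Q}^{M-(r-1)}$ after $r-1$ derivatives, one more derivative drops the exponent by exactly one, yielding $\mathcal{Q}^{M-r}$. The hypothesis $M>r$ guarantees that the intermediate exponents never fall below $1$ (they stay at least $M-r+1\geq 2$ up to the last step), so the single-derivative reduction is legitimate at every stage.

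For (ii) I would bound the same representation term by term. Since each $a_i$ is analytic, it is bounded by some constant $A_i$ on a fixed neighbourhood of $0$; and for the product of generators, $\bigl|q_{j_{i,1}}(x)\cdots q_{j_{i,M}}(x)\bigr|\leq\bigl(\max_j|q_j(x)|\bigr)^{M}\leq|(q_1(x),\dots,q_m(x))|^{M}$, using that the maximum norm is dominated by the Euclidean norm. Summing over the finitely many $i$ gives $|p(x)|\leq C\,|(q_1(x),\dots,q_m(x))|^{M}$ with $C=\sum_i A_i$, which is the claimed bound.

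Since both parts reduce to the Leibniz rule and a crude triangle-inequality estimate, I do not expect a serious obstacle. The only point that genuinely requires care is the bookkeeping in the induction for (i): one must check that the assumption $M>r$ keeps every intermediate exponent positive, so that each single-derivative step is applied to a genuine power of $\mathcal{Q}$ and the final exponent $M-r$ is well defined.
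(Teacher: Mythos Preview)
The paper does not give its own proof of this lemma; it is stated with a citation to \cite{mig} and then used without further justification. Your argument is correct and is the expected one: write $p$ as a finite sum $\sum_i a_i\,q_{j_{i,1}}\cdots q_{j_{i,M}}$, apply the Leibniz rule to see that a single partial derivative sends $\mathcal{Q}^M$ into $\mathcal{Q}^{M-1}$, and iterate $r$ times (the hypothesis $M>r$ keeps all intermediate exponents positive); for (ii) bound the analytic coefficients $a_i$ on a compact neighbourhood of the origin and estimate each product of generators by $|(q_1,\dots,q_m)|^M$. There is nothing in the paper to compare against, and no gap in your reasoning.
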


\begin{lemma}\label{lem:2}
Let $f:(\rr^n,0)\rightarrow(\rr,0)$ be an analytic function. Then there exist a neighbourhood $U$ at $0\in \rr^n$, constant $C>0$ such that for any $x\in U$, $\left| f(x)\right| \leq C \dist (x,V_f) $ ($V_f$ denote zero set of $f$).
\end{lemma}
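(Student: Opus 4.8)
The plan is to exploit the fact that an analytic function is in particular of class $C^1$ near the origin, and hence locally Lipschitz, and then to compare $f(x)$ with its value at the nearest point of the zero set $V_f$. Since $f(0)=0$ we have $0\in V_f$, so $V_f\neq\emptyset$ and the distance $\dist(x,V_f)$ is genuinely the distance to a nonempty closed set; this is what makes the inequality meaningful.

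First I would choose $\rho>0$ and a neighbourhood $\overline{B}(0,\rho)=\{x:|x|\leq\rho\}$ on which $f$ is defined and $C^1$. By the mean value inequality applied to the $C^1$ function $f$ on the convex set $\overline{B}(0,\rho)$, there is a constant $L>0$ (for instance $L=\sup_{|x|\leq\rho}|\nabla f(x)|$) such that
$$
|f(x)-f(y)|\leq L|x-y|,\qquad x,y\in \overline{B}(0,\rho).
$$
I would then take $U=B(0,\rho/3)$.

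The key step is to show that for $x\in U$ the point of $V_f$ realizing the distance to $x$ exists and lies inside $\overline{B}(0,\rho)$, so that the Lipschitz bound can be applied to it. Since $0\in V_f$, we have the crude bound $\dist(x,V_f)\leq|x-0|=|x|<\rho/3$. The set $V_f\cap\overline{B}(0,\rho)$ is compact (closed, being the zero set of a continuous function, and bounded) and nonempty, so there is a point $y\in V_f\cap\overline{B}(0,\rho)$ nearest to $x$; moreover any point of $V_f$ lying outside $\overline{B}(0,\rho)$ has distance from $x$ at least $\rho-|x|>2\rho/3$, which exceeds $\dist(x,V_f)$. Hence $y$ is in fact the nearest point of all of $V_f$, so that $|x-y|=\dist(x,V_f)$, and the triangle inequality gives $|y|\leq|y-x|+|x|<2\rho/3<\rho$, confirming $y\in\overline{B}(0,\rho)$.

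Finally, since $f(y)=0$, the Lipschitz estimate yields
$$
|f(x)|=|f(x)-f(y)|\leq L|x-y|=L\,\dist(x,V_f),
$$
which is the desired inequality with $C=L$. The only genuinely nontrivial point in the argument is the localization step above — guaranteeing that the nearest zero of $f$ to $x$ stays within the neighbourhood on which $f$ is Lipschitz — and this is handled purely by compactness together with the elementary bound $\dist(x,V_f)\leq|x|$; no deeper property of analytic functions (such as the {\L}ojasiewicz inequality itself) is required for this direction.
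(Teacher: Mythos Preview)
Your proof is correct and uses the same idea as the paper: the local Lipschitz property of $f$ together with $0\in V_f$ to compare $f(x)$ with its value at a nearest zero. The paper argues by contradiction while you argue directly, and you are more careful about why the nearest zero stays inside the Lipschitz neighbourhood, but the approaches are essentially the same.
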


\begin{proof} 
Let us assume contrary, that for any neighbourhood $U$ and for any $C>0$ there exists $x \in U$, $\left| f(x)\right| > C \dist(x,V_f)$. In particular for any $\nu \in \mathbb{N}$ there exists $x_{\nu}$, such that $\left| x_{\nu}\right|<\frac{1}{\nu}$, $\left| f(x_{\nu})\right| > \nu \dist(x_{\nu},V_f)$. Moreover there exists $u_{\nu} \in V_f$, that $\dist(x_{\nu},V_f)=\left| x_{\nu}-u_{\nu} \right|$. Then we have $\left| f(x_{\nu})-f(u_{\nu})\right| >$ $\nu \left|x_{\nu}-u_{\nu}\right|$. This contradicts the Lipschitz condition for function $f$. 
\end{proof}

\begin{lemma}\label{lem.tech}
Let $\xi,\eta:U\rightarrow \rr$ be $C^k$ functions such that
\begin{equation}\label{lem.tech0}
A_1|\eta(x)|^2 \leq|\xi(x)|\leq A_2|\eta(x)|^2, \quad |\partial \xi(x)|\leq A_3 |\eta(x)|, \quad x\in U,
\end{equation}
where $A_1,A_2,A_3>0$ are some positive constants and $U\in \rr^n$ is some neighbourhood of the origin,. Then
\begin{equation}\label{lem.tech1}
\left|\partial^k \left( \frac{1}{\xi(x)}\right)\right|\leq B|\eta(x)|^{-|k|-2},\quad x \in U,
\end{equation}
for some constant $B>0$. $k\in\nn_0^n$.
\end{lemma}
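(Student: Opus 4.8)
The plan is to derive a closed structural formula for $\partial^k(1/\xi)$ and then estimate it summand by summand using the two inequalities in \eqref{lem.tech0}. Note first that by the two-sided bound $A_1|\eta|^2\le|\xi|\le A_2|\eta|^2$ the functions $\xi$ and $\eta$ have the same zero set, so $1/\xi$ is of class $C^{|k|}$ exactly on the open set $\{x\in U:\eta(x)\ne 0\}$, and \eqref{lem.tech1} is to be read there. Shrinking $U$ so that $\overline U$ is compact, continuity gives a bound $|\eta(x)|\le R$ on $U$ and finiteness of $M:=\max_{|\alpha|\le|k|}\sup_{\overline U}|\partial^\alpha\xi|$.

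First I would show, by induction on the order $|k|$, that
\begin{equation*}
\partial^k\!\left(\frac{1}{\xi}\right)=\sum_{\mathrm{finite}}\frac{c}{\xi^{\,j+1}}\prod_{i=1}^{j}\partial^{\alpha_i}\xi,
\end{equation*}
where in each summand $c$ is an integer, $1\le j\le|k|$, the multi-indices $\alpha_i\in\nn_0^n$ satisfy $|\alpha_i|\ge 1$, and $\sum_{i=1}^{j}|\alpha_i|=|k|$ (the case $|k|=0$ being the trivial identity $1/\xi$). The base $|k|=1$ is $\partial_\ell(1/\xi)=-\partial_\ell\xi/\xi^2$. For the step, apply $\partial_\ell$ to a generic summand: the quotient rule yields one term with $j+1$ factors and denominator $\xi^{j+2}$, while the product rule yields $j$ terms with $j$ factors in which a single $\alpha_i$ is increased by one in the $\ell$-th slot; in every case the total order increases from $|k|$ to $|k|+1$, so the asserted form is preserved.

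Next I would estimate one summand. Split the factors by order, letting $j_1$ of the $\alpha_i$ have $|\alpha_i|=1$ and $j_2$ have $|\alpha_i|\ge 2$, so $j=j_1+j_2$. The lower bound in \eqref{lem.tech0} gives $|\xi|^{-(j+1)}\le A_1^{-(j+1)}|\eta|^{-2(j+1)}$; the gradient bound $|\partial\xi|\le A_3|\eta|$ controls the first-order factors, contributing $A_3^{j_1}|\eta|^{j_1}$; and each higher-order factor is bounded by the constant $M$, contributing $M^{j_2}$. Collecting powers of $|\eta|$, the summand is dominated by a constant multiple of $|\eta|^{-2(j+1)+j_1}=|\eta|^{-(j_1+2j_2)-2}$.

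The estimate now closes thanks to the elementary order-budget inequality $j_1+2j_2\le\sum_{i=1}^{j}|\alpha_i|=|k|$, which holds because a first-order factor spends exactly one and a higher-order factor at least two units of the total differentiation order $|k|$. Writing $|\eta|^{-(j_1+2j_2)-2}=|\eta|^{-|k|-2}\,|\eta|^{\,|k|-(j_1+2j_2)}$ with nonnegative exponent and using $|\eta|\le R$, each summand is at most a constant times $|\eta|^{-|k|-2}$; summing the finitely many summands gives \eqref{lem.tech1}. The one genuine obstacle is that the hypotheses control $\xi$ and its \emph{first} derivatives by $\eta$ but give no bound on higher derivatives of $\xi$; the argument shows this is harmless, since every higher-order factor forgoes a power of $|\eta|$ yet simultaneously consumes at least two units of the order budget, which is exactly what prevents the exponent from dropping below $-|k|-2$.
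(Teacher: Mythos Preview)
Your proof is correct and rests on the same Fa\`a di Bruno--type expansion as the paper's, namely
\[
\partial^k\!\left(\tfrac{1}{\xi}\right)=\sum \frac{c}{\xi^{\,j+1}}\prod_{i=1}^{j}\partial^{\alpha_i}\xi,\qquad \sum_{i}|\alpha_i|=|k|,\ |\alpha_i|\ge 1,
\]
which is exactly the paper's formula \eqref{lem.tech2} after cancelling $\xi^{m-j}/\xi^{m+1}$. The difference is purely in the bookkeeping of the estimate. The paper separates the sum at $j\approx m/2$, treats even and odd $m$ in parallel, and for large $j$ uses the pigeonhole observation that at least $2j-m$ of the factors must have order one; each piece is then arranged to land exactly on the exponent $-m-2$. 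You instead introduce the counts $j_1,j_2$ of first--order and higher--order factors and invoke the single budget inequality $j_1+2j_2\le |k|$, which packages the same combinatorics uniformly and dispenses with the parity split. The price is the auxiliary bound $|\eta|\le R$ on a compact $\overline U$ to absorb the leftover nonnegative power $|\eta|^{|k|-(j_1+2j_2)}$; the paper likewise relies (implicitly) on local boundedness of $\xi$ and its higher derivatives, so this is not an extra hypothesis in practice. Overall your route is a cleaner rendition of the same argument.
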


\begin{proof}
Let $m=|k|$. By induction  it is easy to show that 
\begin{equation}\label{lem.tech2}
\partial^k \left( \frac{1}{\xi}\right) = \frac{1}{\xi^{m+1}} \left(\sum_{j=1}^{m}\xi^{m-j}\sum_{|i_1|+\dots +|i_j|=m}C_{i_1,\dots,i_j}\partial^{i_1}\xi\cdots\partial^{i_j}\xi\right)
\end{equation}
where $i_1,\dots, i_j\in \nn_0^n$, $i_1+\dots + i_j =k$, $|i_j|\geq 1$ and for some constants $C_{i_1,\dots, i_j}\geq 0$ ($C_{i_1,\dots, i_j}= 0$, when $i_1+\dots + i_j \neq k$). 

Now we will prove \eqref{lem.tech1}. Let us take $k\in \nn_0^n$ and let $|k|=m$. First, consider the case when $m$ is even. 
\begin{align*}
&\left|\frac{1}{\xi^{m+1}} \left(\sum_{j=1}^{m}\xi^{m-j}\sum_{|i_1|+\dots +|i_j|=m}C_{i_1,\dots,i_j}\partial^{i_1}\xi\cdots\partial^{i_j}\xi\right) \right|\\
& \leq \left| \frac{1}{\xi^{m+1}} \left(\sum_{j=1}^{\frac{1}{2}m}\xi^{m-j}\sum_{|i_1|+\dots +|i_j|=m}C_{i_1,\dots,i_j}\partial^{i_1}\xi\cdots\partial^{i_j}\xi\right)\right|\\
& + \left| \frac{1}{\xi^{m+1}} \left(\sum_{j=\frac{1}{2}m+1}^{m}\xi^{m-j}\sum_{|i_1|+\dots +|i_j|=m}C_{i_1,\dots,i_j}\partial^{i_1}\xi\cdots\partial^{i_j}\xi\right)\right|\\
\end{align*}
Note that for $m\geq j\geq \frac{1}{2}m+1$ and for any sequence $i_1, \dots, i_j\in \nn_0^n$, $|i_{j}|\geq 1$, such that $|i_1|+\dots +|i_j|=m$, there exist at least $2j-m$ elements of this sequence which modules are equal $1$. Therefore we can assume that $|i_{m-j+1}|=\dots |i_j|=1$ for $m\geq j\geq \frac{1}{2}m+1$. From this and \eqref{lem.tech0} we obtain
\begin{align*}
&\left|\frac{1}{\xi^{m+1}} \left(\sum_{j=1}^{m}\xi^{m-j}\sum_{|i_1|+\dots +|i_j|=m}C_{i_1,\dots,i_j}\partial^{i_1}\xi\cdots\partial^{i_j}\xi\right) \right|\\
&\leq A_1|\eta|^{-2m-2} \left| \sum_{j=1}^{\frac{1}{2}m}\xi^{m-\frac{1}{2}m}\sum_{|i_1|+\dots +|i_j|=m}C_{i_1,\dots,i_j}\partial^{i_1}\xi\cdots\partial^{i_j}\xi\right|\\
&+ A_1|\eta|^{-2m-2}\left| \sum_{j=\frac{1}{2}m+1}^{m}\xi^{m-j}\sum_{|i_1|+\dots +|i_j|=m}C_{i_1,\dots,i_j}\partial^{i_1}\xi\cdots\partial^{i_{m-j}}\xi \partial^{i_{m-j+1}}\xi\cdots \partial^{i_j}\xi\right|
\end{align*}
\begin{align*}
& \leq A_1|\eta|^{-2m-2} A_2 B_1|\eta|^{2(m-\frac{1}{2}m)}+ A_1|\eta|^{-2m-2} \left| \sum_{j=\frac{1}{2}m+1}^{m}B_2\xi^{m-j}\partial^{i_{m-j+1}}\xi\cdots \partial^{i_j}\xi\right|\\
& \leq A_1 A_2 B_1 |\eta|^{-m-2} + A_1 |\eta|^{-2m-2}\sum_{j=\frac{1}{2}m+1}^{m}A_2A_3B_2|\eta |^{2(m-j)+2j-m}\\
& = B_3|\eta|^{-m-2},
\end{align*}
where $B_1,B_2,B_3$ are some positive constants.

Let us consider the case when $m$ is odd. Note that for $m\geq j\geq \frac{1}{2}(m+1)$ and for any sequaence $i_1, \dots, i_j\in \nn_0^n$, $|i_{j}|\geq 1$, such that $|i_1|+\dots +|i_j|=m$, there exist at least $2j-m$ elements of this sequence which modules are equal $1$. Knowing this fact, similar as previously, we show
\begin{align*}
&\left|\frac{1}{\xi^{m+1}} \left(\sum_{j=1}^{m}\xi^{m-j}\sum_{|i_1|+\dots +|i_j|=m}C_{i_1,\dots,i_j}\partial^{i_1}\xi\cdots\partial^{i_j}\xi\right) \right|\\
& \leq \left| \frac{1}{\xi^{m+1}} \left(\sum_{j=1}^{\frac{1}{2}(m-1)}\xi^{m-j}\sum_{|i_1|+\dots +|i_j|=m}C_{i_1,\dots,i_j}\partial^{i_1}\xi\cdots\partial^{i_j}\xi\right)\right|\\
& + \left| \frac{1}{\xi^{m+1}} \left(\sum_{j=\frac{1}{2}(m+1)}^{m}\xi^{m-j}\sum_{|i_1|+\dots +|i_j|=m}C_{i_1,\dots,i_j}\partial^{i_1}\xi\cdots\partial^{i_j}\xi\right)\right|\\
& \leq B_4 |\eta|^{-2m-2}|\eta|^{2(m-\frac{1}{2}m+\frac{1}{2})} + B_5|\eta|^{-m-2}\\
\end{align*}
for some positive constants $B_4,B_5$. Finally, we proved \eqref{lem.tech1}.
\end{proof}

\section{Proof of Main Theorem}
Let $Z$ be the zero set of $\nabla f$ and let $U\in \rr^n$ be a neighbourhood of $0$ such that $f$ and $g$ are well defined. By Lemma \ref{lem:2} there exists a positive constant $A$ such that
\begin{equation}\label{E:5}
|\nabla f(x)|\leq A \dist(x,Z)\quad\textrm{ for }x\in U.
\end{equation}
Define the function $F:\rr^{n}\times U\rightarrow \rr$ by the formula
$$
F(\xi,x)=f(x)+\xi(g-f)(x),
$$
obviously
$$
\nabla F(\xi,x)=\left((g-f)(x),\nabla f(x)+\xi\nabla (g-f)(x)\right).
$$
Let $G=\{(\xi,x)\in\rr\times U:|\xi|< \delta\}$ where $\delta \in \nn$, $\delta>2$. From the above, diminishing $U$ if necessary, we have that there exists a constant $C_1>0$ such that
\begin{equation}\label{E:7}
|\nabla f(x)|\leq C_1|\nabla F(\xi,x)|\quad\textrm{ for }(\xi,x)\in G. 
\end{equation}
Indeed,
$$
|\nabla F(\xi,x)|\geq |\nabla f(x)-\xi \nabla (g-f)(x)|\geq |\nabla f(x)|- |\xi||\nabla (g-f)(x)|.
$$
Since $(g-f)\in (f)^{r+2}$ and $r\geq 1$, so from Lemma \ref{lem:1} and \eqref{eq:lo} we get 
$$
|\nabla (g-f)(x)|\leq C'_2|f(x)|^{r+1}\leq C_2|\nabla f(x)|^{r+1} \leq C_2|\nabla f(x)|^2
$$
for some positive constants $C_2,C'_2$. Hence, diminishing $U$ if necessary, 
$$
|\nabla F(\xi,x)|\geq |\nabla f(x)|- |\xi|C_2|\nabla f(x)|^2\geq C_1|\nabla f(x)| \quad \textrm{ for } (\xi,x)\in G.
$$ 
Moreover, from definition of $\nabla F$ we get at once, that there exists a positive constant $C_3$ such that
\begin{equation}\label{E:7'}
|\nabla f(x)|\geq C_3|\nabla F(\xi,x)|\quad\textrm{ for }(\xi,x)\in G.
\end{equation}

Now we will show that the mapping $X:G\rightarrow \mathbb{R}^n\times \mathbb{R}$ defined by
$$
X(\xi,x)=(X_1,\dots,X_{n+1})= \left\{ \begin{array}{ll}
\frac{(g-f)(x)}{\left|\nabla F(\xi,x) \right|^2}\nabla F(\xi,x) & \textrm{ for $x\notin Z$}\\
0 & \textrm{ for $x\in Z$}
\end{array} \right.
$$
is a $C^r$ mapping. The proof of this fact will be divided into several steps

\begin{st}\label{st:1}
The mapping $X$ is continuous in $G$.
\end{st}
Indeed, let us fix $\xi$ and let $h_i(\xi,x)=(g-f)(x)\frac{\partial F}{\partial x_i}(\xi,x)$. Then for $x\in U$ and $x \notin Z$, from \eqref{eq:lo} and Lemma \ref{lem:1} we have $|X_i(\xi,x)|\leq A_1|\nabla f(x)|^{r+1}\leq A' \dist(x,Z)^{r+1}$
for some positive constants $A_1,A'$. The above inequality also holds for $x\in Z$. Since $A'$ does not depend on the choice of $\xi$ so for $(\xi,x)\in G$ we obtain
\begin{equation}\label{E:8}
|X(\xi,x)|\leq A' \dist(x,Z)^{r+1}.
\end{equation}
Therefore $X$ is continuous in $G$.

\begin{st}\label{st:2}
Let $\alpha=(\alpha_0, \dots , \alpha_n) \in \nn_0^{n+1}$ be a multi-index such that $|\alpha|\leq r$, then, diminishing $U$ if necessary, 
$$
|\partial^{\alpha} X_i (\xi,x)|\leq A''\dist(x,Z)^{r+1-|\alpha|} \textrm{ for } x\notin Z.
$$ 
where $\partial ^ {\alpha} X_i=\partial ^{\alpha_0}\cdots \partial^{\alpha_{n+1}}X_i=\frac{\partial^{|\alpha|}X_i}{\partial \xi^{\alpha_0}\partial x_1^{\alpha_1} \cdots \partial x_n^{\alpha_n}}.$
\end{st}
Indeed, from Leibniz rule we have
\begin{equation}\label{E:9}
\partial^{\alpha} X_i(\xi,x)=\sum_{\beta \leq \alpha} \binom{\alpha}{\beta}\partial^{\alpha-\beta}(h_i(\xi,x))\partial^{\beta}\left( \frac{1}{|\nabla F(\xi,x)|^2}\right).
\end{equation}

Diminishing $G$ if necessary, from Lemma \ref{lem.tech} we obtain
$$
\left|\partial^{\beta}\left( \frac{1}{|\nabla F(\xi,x)|^2}\right)\right|\leq\frac{A''_{\beta}}{|\nabla F(\xi,x)|^{|\beta|+2}},
$$
for some constants $A''_{\beta}>0$. Therefore from \eqref{E:9} we have
\begin{equation}\label{E:15}
|\partial^{\alpha}X_i(\xi,x)|\leq \sum_{\beta\leq \alpha}\binom{\alpha}{\beta}|\partial^{\alpha-\beta}(h_i(\xi,x))|\frac{A''_{\beta}}{|\nabla F(\xi,x)|^{|\beta|+2}}.
\end{equation}
Let us fix $\xi$. From Lemma \ref{lem:1}, \eqref{E:7'} and \eqref{eq:lo}  we have 
\begin{equation}\label{E:14}
|\partial^{\alpha-\beta}(h_i(\xi,x))|\leq B_{\alpha-\beta} |\nabla f(x)|^{r+3-|\alpha|+|\beta|}
\end{equation}
for some positive constant $B_{\alpha-\beta}$. Since $B_{\alpha-\beta}$ doesn't depend on the choice of $\xi$ so this equality holds for $(\xi,x)\in G$. Finally from \eqref{E:15}, \eqref{E:14} \eqref{E:7}, \eqref{E:7'} and \eqref{E:5} we obtain
\begin{align*}
|\partial^{\alpha} X_i(\xi,x)|&\leq \sum_{\beta \leq \alpha}\binom{\alpha}{\beta}B_{\alpha-\beta}|\nabla f(x)|^{r+3-|\alpha|+|\beta|}\frac{A''_{\beta}}{|\nabla F(\xi,x)|^{|\beta|+2}}\\
&\leq \sum_{\beta \leq \alpha}\binom{\alpha}{\beta}A''_{\beta}B_{\alpha-\beta}|\nabla f(x)|^{r+3-|\alpha|+|\beta|-|\beta|-2}\\
&\leq \frac{A''}{A} |\nabla f(x)|^{r+1-|\alpha|}\leq A'' \dist(x,Z)^{r+1-|\alpha|},
\end{align*}
for some constant $A''>0$.

\begin{st}\label{st:3}
Partial derivatives $\partial^{\alpha} X_i$ vanish for $x\in Z$ and $|\alpha|\leq r$.
\end{st}

Indeed, we will  carry out induction with respect to $|\alpha|$. Let $t\in \rr$, $x \in Z$ and let $x^t_m=(x_1,\dots,x_m+t,\dots, x_n)$. For $|\alpha|=0$ hypothesis is obvious. Assume that hypothesis is true for $|\alpha|\leq r-1$. Then from Step \ref{st:2} we have
\begin{align*}
\frac{|\partial^{\alpha}X_i(\xi,x^t_m)-\partial^{\alpha}X_i(\xi,x)|}{|t|}&=\frac{|\partial^{\alpha}X_i(\xi,x^t_m)|}{|t|}\leq \frac{A''\dist(x^t_m,Z)^{r+1-|\alpha|}}{|t|}\\
&\leq \frac{A''|t|^{r+1-|\alpha|}}{|t|}=A''|t|^{r-|\alpha|}.
\end{align*}
Since $r-|\alpha|\geq r-r+1= 1$, we obtain $\partial^{\gamma}X_i(\xi,X)=0$ for $x\in Z$ and $|\gamma|=|\alpha|+1$. This completes Step \ref{st:3}.

In summary from Step \ref{st:1}, \ref{st:2} and \ref{st:3} we obtain that $X_i$ are $C^r$ functions in $G$. Therefore $X$ is a $C^r$ mapping in $G$.

Define a vector field $W:G\rightarrow \rr^n$ by the formula
$$
W(\xi,x)=\frac{1}{X_1(\xi,x)-1}(X_2(\xi,x),\dots,X_{n+1}(\xi,x)).
$$
Diminishing $U$ if necessary, we may assume that $A'\dist(x,Z)< \frac{1}{2}$. From \eqref{E:8} we obtain
$$
\left|X_1(\xi,x)-1\right|\geq 1-\left|X(\xi,x)\right|\geq 1-A'\dist(x,Z)>\frac{1}{2}, \quad (\xi,x)\in G.
$$
Hence the field $W$ is well defined and it is a $C^r$ mapping. %From the above we have

Consider the following system of ordinary differential equations
\begin{equation}\label{E:12}
\frac{dy}{dt}=W(t,y). 
\end{equation}
Since $r\geq 1$, then $W$ is at least of class ${C}^1$ on $G$, so it is a lipschitzian vector field. As a consequence, the above system has a uniqueness of solutions property in $G$. Since $y_0(t)=0$, $t \in (-2,2)$ is one of the solutions of \eqref{E:12}, then the above implies the existence of a neigbourhood $U\subset \mathbb{R}^n$ of $0$ such that every integral solution $y_x$ of \eqref{E:12}  with $y_x(0)=x$, where $x\in U$, is defined at least in $[0,1]$.

Now, let us define a mapping $\varphi:U\rightarrow \mathbb{R}^n$ by the formula
$$
\varphi(x)=y_x(1),
$$  
where $y_x$ stands for an integral solution of \eqref{E:12} with $y_x(0)=x$. This mapping is $\varphi$ is a $C^r$ bijection.It gives a $C^r$ diffeomorphism of some neighbourhood of the origin. Indeed, considering solutions $\bar{y}_x:[0,1]\rightarrow \mathbb{R}^n$ of \eqref{E:12} with $\bar{y}_x(1)=x$, where $x$ is from some neigbourhood of the origin, we get that $\varphi(\bar{y}_x(0))=x$. Similar reasoning shows that the mapping $x \rightarrow \bar{y}_x(0)$ for $x$ is class $C^r$ in the neigbourhood of the origin. Consequently $\varphi :(\mathbb{R}^n,0)\rightarrow (\mathbb{R}^n,0)$ is a $C^r$ diffeomorphism and maps a neighbourhood of the origin onto a neighbourhood of the origin. 

Finally, note that for any $x\in U$, 
\begin{equation}\label{E:13}
F(t,y_x(t))=const. \quad \textrm{ in }[0,1].
\end{equation}
Indeed, from definition of $W$ we derive the formula
$$
[1,W(\xi,x)]=\frac{1}{X_1(\xi,x)-1}(X(\xi,x)-e_1)\quad \textrm{ for }(\xi,x)\in G,
$$
where $e_1=[1,0,\dots,0]\in\mathbb{R}^{n+1}$ and $[1,W]:G\rightarrow \mathbb{R}\times \mathbb{R}^n$. Thus, if we denote by $\left\langle a,b\right\rangle$ the scalar product of two vectors $a$, $b$, then for $t\in [0,1]$, we have
\begin{align*}
&\frac{dF(t,y_x(t))}{dt}= \left\langle (\nabla F)(t,y_x(t)),[1,W(t,y_x(t))]\right\rangle\\
&=\frac{1}{X_1(t,y_x(t))-1}\left( \left\langle (\nabla_x F)(t,y_x(t)),X(t,y_x(t))\right\rangle -\frac{\partial F}{\partial \xi}(t,y_x(t))\right)\\
&=\frac{1}{X_1(t,y_x(t))-1}(g(y_x(t))-f(y_x(t))-g(y_x(t))+f(y_x(t)))=0.
\end{align*}
This gives \eqref{E:13}. Finally, \eqref{E:13} yields
$$
f(x)=F(0,x)=F(0,y_x(0))=F(1,y_x(1))=F(1,\varphi(x))=g(\varphi(x)).
$$
for $x\in U$. This ends the proof.
 
$\hfill \Box$

\section{Remark}
 
In Main Theorem we can not omit the assumption about analtyticity of function $f$ and $g$. It follows from the fact that the {\L}ojasiewicz gradient inequality holds only for analytic functions.

Note that the condition $g-f \in (f)^{r+2}$ in Main Theorem can be replaced by $g=f(hf^{r+1}+1)$, where $h:(\rr^n,0)\rightarrow \rr$ is an analytic function. It seems natural to try to replace this condition by $g=hf$, where $h:(\rr^n,0)\rightarrow \rr$ is an analytic function such that $h(0)\neq 0$. But then the theorem would not hold. Indeed, let $f(x)=x^2$, $g(x)=-x^2$ and $h(x)=-1$, then $g=hf$ but $f$ and $g$ are not right equivalent.

\end{document}